\newtheorem{theorem}{Theorem}
\newtheorem{lemma}[theorem]{Lemma}
\newtheorem{corollary}[theorem]{Corollary}
\theoremstyle{definition}
\newtheorem{example}[theorem]{Example}
\newtheorem{claim}[theorem]{Claim}
\newcommand{\sur}{\mbox{\rm Sur}\hspace{0.02in}}
\newcommand{\PS}{\mbox{\rm PS}\hspace{0.01in}}
\newcommand{\buletita}{{\tiny$\bullet$}}
\begin{document}
\title{Chang Palais-Smale condition and global inversion}
\author{Olivia Gut\'u}
\date{\today}
\keywords{Palais-Smale condition, global inversion}
\email{oliviagutu@mat.uson.mx}
\address{Universidad de Sonora.  Departamento de Matem\'aticas. Blvd. Encinas y Rosales s/n, C. P. 83000. Hermosillo, Sonora, M\'exico.}
\maketitle

\begin{abstract}
Let $f:X\rightarrow Y$ be a  local $C^1$-diffeomorphism between real Banach spaces. We prove that if the locally Lipschitz funcional $x\mapsto \frac{1}{2}| f(x)-y|^2$ satisfies the Chang Palais-Smale condition for all $y\in Y$, then $f$ is a norm-coercive global $C^1$-diffeomorphism. We also give a version of this fact for a weighted Chan Palais-Smale conditon. Finally, we study the relationship of this criterion to some  classical global inversion conditions.
\end{abstract}

\section{Introduction}

 Let $(X,|\cdot|)$ and  $(Y,\pmb{|}\cdot\pmb{|})$ be real Banach spaces. A $C^1$ map $f:X\rightarrow Y$ is said to be a global $C^1$-diffeomorphism provided it is bijective and its inverse is also a $C^1$-map.  Recall, by the Inverse Mapping Theorem,  $f$ is a $C^1$-map such that $df(x)$ is a linear isomorphism for all $x\in X$ if and only if it is  a local $C^1$-diffeomorphism.  An old question motivated by the study of existence and uniqueness of the solutions of a nonlinear equations is: Under which conditions a local $C^1$-diffeomorphism $f$ is a global one?  An old  answer, and one of the must important,  was given by  Banach and  Mazur \cite{banachmazur}, and independently by Caccioppoli \cite{cacciopoli}: $f$ is a global $C^1$-diffeomorphism if and only if $f$ is a proper map. Recall,  $f:X\rightarrow Y$  is said to be {\it proper} if  $f^{-1}(K)$ is compact whenever $K$ is compact. For example, if $X=Y=\mathbb{R}^n$, then $f$ is a proper map if and only if it is  {\it norm-coercive}, namely, $\lim_{|x|\rightarrow+\infty}\pmb{|}f(x)\pmb{|}=+\infty.$  So we get the following theorem, which goes back to Hadamard \cite{hadamard}: A local $C^1$-diffeomorphism $f:\mathbb{R}^n\rightarrow\mathbb{R}^n$ is a global $C^1$-diffeomorphism if and only if it  is norm-coercive. The Hadamard Theorem supports the false  idea that every continuous bijection between vector normed spaces must be norm-coercive. Actually, in the infinite-dimensional setting, every global homeomorphism is a proper map but not necessarily  it is norm-coercive \cite{tseng}.  The main propose of this paper is give a criterion for a local $C^1$-diffeomorphism between real Banach spaces to be a norm-coercive global diffeomorphism.

The proof of the Banach-Mazur theorem and other classical global inversion theorems have exploited the use of covering space techniques via the homotopy-lifting property or some different approach,  but always through some variant of the same argument which includes a sort of monodromy process \cite{plastock,john,ioffe,browder,rheinboldt}.  Nevertherless, since the early nineties, some global inversion conditions that test the effectiveness of the  monodromy argument have  emerged. The crucial hypothesis involves the  Palais-Smale condition of a suitable functional $F$ in terms of  $f$, see e.g.  \cite{rabier2,xaviernollet}. Recall, a $C^1$-functional $F:X\rightarrow\mathbb{R}$ satisfies the Palais-Smale condition if any sequence $\{x_n\}$ in $X$ such that $\{F(x_n)\}$ is bounded and $\|\nabla F(x_n)\|\rightarrow$ 0  contains a convergent subsequence, whose limit is then a critical point of $F$.  Some ideas on the matter have appeared long time before.  For example,   Gordon \cite{gordon} relates the original condition (C) of Palais and Smale \cite{palaismale}  to the Hadamard-Caccippoli Theorem. \footnote{The definition of the Palais-Smale condition given above is a stronger  than the original condition (C) of Palais and Smale but has been widely used in the context of Banach spaces; see Section 3 of  \cite{mawhin2}.}

A  remarkable work in this line correspond to Katriel  who gave a global inverse result for maps $f:X\rightarrow Y$ between certain metric spaces by means of an abstract mountain-pass theorem and an Ekeland variational principle, see Theorem 6.1 in \cite{katriel}. Unlike approach
followed in \cite{rabier2} or \cite{xaviernollet}, Katriel technique works in infinite-dimensional setting as well. Looking closely, from the proof of Theorem 6.1 in \cite{katriel} we can conjecture that: If $(Y,\pmb{|}\cdot\pmb{|})$ is a normed space, $f$ is a local homeomorphism, and the map $x\mapsto \pmb{|}f(x)-y\pmb{|}$ satisfies a  sort of Palais-Smale condition for all $y\in Y$, then $f$ must be bijective. In this spirit, Idczak {\it et al.} \cite{idczak} adapted the ideas of Katriel \cite{katriel}  to get a global inversion theorem for a $C^1$ map: let $f:X\rightarrow Y$ be  a local $C^1$-diffeomorphism such that $X$ is a real Banach space and $Y$ is a real Hilbert space, then $f$ is a global diffeomorphism if, for all $y\in Y$,  the funcional  $F_y(x)=\frac{1}{2}\pmb{|}f(x)-y\pmb{|}^2$ satisfies the Palais-Smale condition ---in such case,  $\frac{1}{2}\pmb{|}\cdot\pmb{|}^2$ is of class $C^1$.

In the first part of this paper, we give a generalization of the Idczak {\it et al.}  result for functions between real Banach spaces  in terms of a weighted version of the Chang Palais-Smale condition; see Theorem \ref{main2}. The Chang Palais-Smale condition was introduced by Chang \cite{chang} in order to extend the variational methods for locally Lipschitz functionals, as it is the mapping $x\mapsto\frac{1}{2}\pmb{|}f(x)-y\pmb{|}^2$, for any real Banach space $(Y,\pmb{|}\cdot\pmb{|})$. The  proposed weighted version  is a particular case of the Zhong's Palais-Smale condition given by Motreanu {\it et al.} \cite{montreanu} for a general class of nonsmooth functionals.  Also, we split the hypothesis of Theorem \ref{main2} as an injectivity and surjectivity criteria in order to clarify the relationship of Theorem \ref{main2} with some classical global inversion theorems; see  Theorem \ref{daggertheorem} and Theorem \ref{doubledaggerheorem}. For example, we prove that  if $\int_0^\infty\inf_{|x|\leq\rho}\frac{1}{\|df(x)^{-1}\|}d\rho=\infty$ (hypothesis of the Hadamard-Levy-Plastock Theorem \cite{hadamard, john, plastock}) then, for every $y\in Y$,  then the map  $F_y(x)=\frac{1}{2}\pmb{|}f(x)-y\pmb{|}^2$ satisfies the weighted Chang Palais-Smale condition for some weight; see Example \ref{ejemplocondicionintegral}. In the second part, we present the whole picture of  Theorem \ref{main2} with respect to other  global inversion theorems, including the Theorem 6.1 of Katriel in \cite{katriel}  and the Banach-Mazur-Caccioppoli Theorem.

\section{Global inversion theorem}

Let $(X,|\cdot|)$ be a real Banach space and let  $X^*$ be its dual space, where $\langle x^*,x\rangle$ denotes the duality for all $x\in X$ and $x^*\in X^*$. Let $F:X\rightarrow \mathbb{R}$ be a locally Lipschitz functional. For each $x\in X$, the generalized directional derivative in direction $v$ is defined as:
$$F^{0}(x; v)=\limsup_{\substack{h\to 0 \\ t\downarrow 0}}\frac{1}{t}\left(F(x+h+tv)-F(x+h)\right).$$
 The map $v\mapsto F^0(x;v)$ is convex and continuous. Let $\partial F(x)$ be the {\it Clarke generalized gradient of $F$ at $x$}. That is, $w^*\in\partial F(x)\subset X^*$ if and only if $\langle w^*,v\rangle\leq F^{0}(x;v)$ for all $v\in X$. Recall, $\partial F(x)$ is a non-empty $w^*$-weakly compact convex set. In order to extend the classical Palais-Smale condition to locally Lipschitz functions, Chang \cite{chang} defined the  lower semi-continuous function: 
$$\lambda_F(x)=\min_{w^*\in\partial F(x)}|w^*|_{X^*}$$

\

\noindent{\bf Chang \PS-condition.} A locally Lipschitz function $F$ satisfies the {\it Chang $\PS$-condition} if any sequence $\{x_n\}$ in $X$ such that $\{F(x_n)\}$ is bounded and $$\lim_{n\rightarrow \infty}\lambda_F(x_n)=0$$ contains a (strongly) convergent subsequence. 

\

Of course, if $F$ is continuously Fr\'echet differentiable map, then the above definition reduces to the usual Palais-Smale condition, since in this case $\partial F(x)=\{dF(x)\}$ and therefore $\lambda_F(x)=\|dF(x)\|$. A typical  extension for  Palais-Smale conditions involves a continuous non-decreasing function $h:[0,+\infty)\rightarrow[0,+\infty)$ such that \begin{equation}\label{integral}\int_0^\infty \frac{1}{1+h(\rho)}d\rho=+\infty.\end{equation} We define:

\

\noindent{\bf Weighted Chang \PS-condition.} A locally Lipschitz function $F$ satisfies the {\it weighted Chang $\PS$-condition} if any sequence $\{x_n\}$ in $X$ such that $\{F(x_n)\}$ is bounded and \begin{equation}\label{limiteacero}\lim_{n\rightarrow \infty}\lambda_F(x_n)(1+h(|x_n|))=0\end{equation} contains a (strongly) convergent subsequence. 

\

If $h=0$, the weighted Chang \PS-condition reduces to  \PS-condition given by Chang. If $h(t)=t$, for all $t\geq 0$ the weighted Chang \PS-condition expresses the extension of the \PS-condition in the sense of Cerami to the locally Lipschitz functionals. In \cite{zhong}, Zhong established a weighted \PS-condition but for G\^ateaux differentiable functionals, in terms of $\|dF(x)\|$ instead of $\lambda_F(x)$. Our weighted Chang \PS-condition is a particular case of a  version of Zhong's Palais-Smale condition given by Motreanu {\it et al.}; see Definition 1.3 of \cite{montreanu}.

As we may expected, if we have a converging sequence $\{x_n\}$ in $X$ that satisfies \eqref{limiteacero}, then the limit is a critical point of $F$. That is:

\begin{lemma}\label{lambdacero}
Let $(X,|\cdot|)$ be a real Banach space and let $F:X\rightarrow\mathbb{R}$ be a locally Lipschitz functional. Let $\{x_n\}\subset X$ be a sequence such that $\lim_{n\rightarrow\infty}x_n=\hat x$ and satisfying \eqref{limiteacero}. Then,  for a continuous non-decreasing function $h:[0,+\infty)\rightarrow[0,+\infty)$ such that \eqref{integral} is fulfilled,  we have $\lambda_F(\hat x)=0$.
\end{lemma}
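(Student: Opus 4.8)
The plan is to exploit the upper semicontinuity of the Clarke generalized gradient together with the fact that $\lambda_F$ is lower semicontinuous. Since $x_n\to\hat x$, the sequence $\{|x_n|\}$ is bounded, say $|x_n|\le R$ for all $n$; because $h$ is non-decreasing and nonnegative we get $1+h(|x_n|)\ge 1>0$, so from \eqref{limiteacero} we immediately deduce $\lambda_F(x_n)\to 0$ (the weight only helps here, it never hurts). Thus it suffices to show: if $x_n\to\hat x$ and $\lambda_F(x_n)\to 0$, then $\lambda_F(\hat x)=0$.

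First I would pick, for each $n$, a minimizing element $w_n^*\in\partial F(x_n)$ with $|w_n^*|_{X^*}=\lambda_F(x_n)$; this exists because $\partial F(x_n)$ is nonempty and $w^*$-weakly compact (hence the norm, being $w^*$-lower semicontinuous, attains its infimum on it). Then $|w_n^*|_{X^*}\to 0$, so $w_n^*\to 0$ strongly in $X^*$, a fortiori $w_n^*\xrightarrow{w^*}0$. Now I invoke the upper semicontinuity of $x\mapsto\partial F(x)$ in the following standard form for locally Lipschitz functionals: if $x_n\to\hat x$, $w_n^*\in\partial F(x_n)$, and $w_n^*\xrightarrow{w^*}\hat w^*$, then $\hat w^*\in\partial F(\hat x)$. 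Applied here, $0\in\partial F(\hat x)$, which gives $\lambda_F(\hat x)=\min_{w^*\in\partial F(\hat x)}|w^*|_{X^*}=0$.

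The one point that deserves care is the closed-graph / upper-semicontinuity property of $\partial F$, since in a general (non-reflexive) Banach space one must be careful about which topology on $X^*$ makes this work. The clean statement is: for $F$ locally Lipschitz near $\hat x$, $\partial F$ has closed graph when $X$ carries the norm topology and $X^*$ carries the weak$^*$ topology. This is precisely Proposition 2.1.5 in Clarke's book; it follows from the definition via $F^0(x;v)=\limsup_{y\to x,\,t\downarrow 0}\frac{1}{t}(F(y+tv)-F(y))$ being upper semicontinuous in $x$, so that passing to the limit in $\langle w_n^*,v\rangle\le F^0(x_n;v)$ yields $\langle\hat w^*,v\rangle\le F^0(\hat x;v)$ for every $v\in X$. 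I would cite this rather than reprove it. Alternatively, since here $w_n^*\to 0$ in norm, one can argue even more directly with the definition of $F^0$: for any fixed $v$, $\langle w_n^*,v\rangle\le F^0(x_n;v)$ and the left side tends to $0$ while $\limsup_n F^0(x_n;v)\le F^0(\hat x;v)$, so $0\le F^0(\hat x;v)$ for all $v$, i.e. $0\in\partial F(\hat x)$.

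The main (and only) obstacle is therefore bookkeeping about topologies on the dual: making sure the upper-semicontinuity statement one uses is the correct one for a non-reflexive $X$. Everything else — boundedness of $\{|x_n|\}$, dropping the weight, extracting the minimizing functionals — is routine.
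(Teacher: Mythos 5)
Your argument is correct, and it reaches the conclusion by a slightly different (more self-contained) route than the paper. The paper's proof is a one-liner: it uses the fact, already recorded when $\lambda_F$ is introduced, that $\lambda_F$ is lower semicontinuous, hence so is $x\mapsto\lambda_F(x)(1+h(|x|))$, and then
$\lim_n\lambda_F(x_n)(1+h(|x_n|))\geq\lambda_F(\hat x)(1+h(|\hat x|))\geq\lambda_F(\hat x)$ forces $\lambda_F(\hat x)=0$. You instead first discard the weight (legitimate, since $1+h(|x_n|)\geq 1$ gives $\lambda_F(x_n)\to 0$), pick norm-minimizing $w_n^*\in\partial F(x_n)$ (attainment via $w^*$-compactness of $\partial F(x_n)$ and $w^*$-lower semicontinuity of the dual norm, as you note), and then use the norm-to-weak$^*$ closed graph of $\partial F$ --- or, even more directly, the upper semicontinuity of $x\mapsto F^0(x;v)$ applied to $\langle w_n^*,v\rangle\leq F^0(x_n;v)$ --- to get $0\in\partial F(\hat x)$. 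This is essentially a proof of the lower semicontinuity of $\lambda_F$ at $\hat x$, i.e.\ you re-derive the ingredient the paper simply quotes (it goes back to Chang and rests on exactly the Clarke closed-graph property you cite). What your version buys is self-containedness and explicit care about the correct topology on $X^*$ in the non-reflexive case; what the paper's version buys is brevity, and it also shows the weight never needs to be dropped at all, since the weighted quantity itself is lower semicontinuous. Both are valid; no gap.
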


\begin{proof}
Since $\lambda_F$ is lower semi-continuous,  the map $x\mapsto\lambda_F(x)(1+h(|x|))$ is lower semi-continuous. By  \eqref{limiteacero}, $\lambda_F(\hat x)=0$, since:
$$\lim_{n\rightarrow\infty}\lambda_F(x_n)(1+h(|x_n|))=\liminf_{n\rightarrow\infty}\lambda_F(x_n)(1+h(|x_n|))\geq\lambda_F(\hat x)(1+h(|\hat x|))\geq\lambda_F(\hat x).$$
\end{proof}

Let $f$ be map between real Banach spaces. We are interested in establish a criterion in terms of the weighted Chang \PS-condition in order to ensure the existence and uniqueness of the solutions of a nonlinear equation:
\begin{equation}\label{ecuacionnolineal}f(x)=y.\end{equation} 
Our main result is the following:

\begin{theorem}\label{main}
Let $(X,|\cdot|)$ and  $(Y,\pmb{|}\cdot\pmb{|})$ be real Banach spaces and let $f:X\rightarrow Y$ be a  local $C^1$-diffeomorphism. Let $y\in Y$. If the locally Lipschitz functional $$F_y(x)=\frac{1}{2}\pmb{|} f(x)-y\pmb{|}^2$$ satisfies the weighted Chang $\PS$-condition for some  continuous nondecreasing function $h:[0,+\infty)\rightarrow[0,+\infty)$ such that \eqref{integral} is fulfilled, then there is a unique solution of the nonlinear equation \eqref{ecuacionnolineal}.
\end{theorem}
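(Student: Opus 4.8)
The plan is to show that $F_y$ attains its global minimum at some point $\hat x$, that this minimum value is zero (so $f(\hat x) = y$, giving existence), and that $f$ is injective (giving uniqueness). Throughout, fix $y$ and write $F = F_y$.

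\textbf{Step 1: Existence via minimization.} I would first observe that $F$ is bounded below by $0$, so $c = \inf_X F \geq 0$ is finite. The weighted Chang \PS-condition should yield a minimizing sequence that is (after passing to a subsequence) convergent, provided I can find a minimizing sequence along which $\lambda_F(x_n)(1+h(|x_n|)) \to 0$. This is the classical role of Ekeland's variational principle: applied to $F$ with a carefully chosen sequence of $\varepsilon_n \downarrow 0$, it produces $x_n$ with $F(x_n) \leq c + \varepsilon_n$ and such that $x \mapsto F(x) + \varepsilon_n |x - x_n|$ has a minimum at $x_n$; a standard nonsmooth-calculus argument (using that $0 \in \partial(F(\cdot) + \varepsilon_n|\cdot - x_n|)(x_n)$ and the sum rule for Clarke gradients) gives $\lambda_F(x_n) \leq \varepsilon_n$. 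To control the weight, I would need the finer Zhong-type form of Ekeland's principle (this is exactly why the function $h$ satisfying \eqref{integral} enters), which yields $x_n$ with $\lambda_F(x_n)(1+h(|x_n|)) \to 0$ while $F(x_n) \to c$. Then the weighted Chang \PS-condition gives a convergent subsequence $x_n \to \hat x$, and by continuity $F(\hat x) = c$, so $\hat x$ is a global minimizer; by Lemma \ref{lambdacero}, $\lambda_F(\hat x) = 0$.

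\textbf{Step 2: The minimum value is zero.} Here I use that $f$ is a local $C^1$-diffeomorphism. I would compute the Clarke gradient of $F$ at $\hat x$. Since $F = \frac{1}{2}\pmb{|}\cdot\pmb{|}^2 \circ (f(\cdot) - y)$ is a composition of the $C^1$ map $f$ with the (locally Lipschitz, convex) function $\frac12\pmb{|}\cdot\pmb{|}^2$, the chain rule for Clarke gradients gives $\partial F(\hat x) \subseteq df(\hat x)^* \, \partial\big(\tfrac12\pmb{|}\cdot\pmb{|}^2\big)(f(\hat x) - y)$. Since $df(\hat x)^*$ is an isomorphism $Y^* \to X^*$, $\lambda_F(\hat x) = 0$ forces $0 \in \partial\big(\tfrac12\pmb{|}\cdot\pmb{|}^2\big)(f(\hat x) - y)$, and for the convex function $\frac12\pmb{|}\cdot\pmb{|}^2$ this happens only at the origin. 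Hence $f(\hat x) = y$, so $c = 0$ and the equation has a solution.

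\textbf{Step 3: Uniqueness.} Suppose $f(x_0) = f(x_1) = y$. Both are global minimizers of $F$ with value $0$. I would like to connect them by a path and run a mountain-pass / deformation argument: if $x_0 \neq x_1$, the set $f^{-1}(y)$ is discrete (because $f$ is a local homeomorphism, the preimage of a point is discrete), so $x_0$ and $x_1$ are isolated zeros of $F$; a mountain-pass theorem for locally Lipschitz functionals (Chang's version, using precisely the Chang \PS-condition at the mountain-pass level $c^* > 0$) then produces a critical point at a positive critical value, i.e. a point $z$ with $\lambda_F(z) = 0$ but $F(z) > 0$ — contradicting Step 2's analysis, which showed every point with $\lambda_F = 0$ satisfies $f(\cdot) = y$, hence $F = 0$. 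Alternatively, and perhaps more cleanly in the spirit of Katriel, one shows $f$ is a covering map onto its image and $Y$ is simply connected, but the mountain-pass route keeps everything inside the \PS-framework already set up.

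The main obstacle I anticipate is Step 1, specifically extracting a minimizing sequence that respects the \emph{weight} $1 + h(|x_n|)$: the plain Ekeland principle only controls $\lambda_F(x_n)$, not the weighted quantity, and one must invoke Zhong's sharpened variational principle whose hypothesis is exactly \eqref{integral}. A secondary technical point is justifying the Clarke chain rule in the form $\partial F(\hat x) \subseteq df(\hat x)^*\,\partial(\tfrac12\pmb{|}\cdot\pmb{|}^2)(f(\hat x)-y)$ with equality-like sharpness sufficient to conclude — this is standard (Clarke, since the outer function is regular/convex) but should be stated carefully. Finally, in Step 3 one must verify the geometric hypotheses of the nonsmooth mountain-pass theorem (separation of the two minima, the linking/mountain-pass geometry), which requires that $f^{-1}(y)$ is discrete — immediate from $f$ being a local homeomorphism.
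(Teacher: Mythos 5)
Your Steps 1--2 are essentially the paper's existence argument: a Zhong-type weighted Ekeland principle produces a minimizing sequence satisfying \eqref{limiteacero} (this is Lemma \ref{mimimizersequence}), the weighted Chang \PS-condition extracts a convergent subsequence, and the chain rule $\partial F_y(\hat x)\subseteq df(\hat x)^*\,\partial\bigl(\tfrac12\pmb{|}\cdot\pmb{|}^2\bigr)(f(\hat x)-y)$ together with the invertibility of $df(\hat x)^*$ forces $f(\hat x)=y$; this is exactly the content of Claims \ref{claim1} and \ref{claim2} combined with Lemma \ref{lambdacero}. The problems are in Step 3. You invoke ``Chang's version'' of the mountain-pass theorem, whose hypothesis is the \emph{unweighted} Chang \PS-condition (at the mountain-pass level, and in Chang's original formulation on a reflexive space). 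But the theorem you are proving only assumes the weighted condition for some $h$ satisfying \eqref{integral}, which is strictly weaker: a sequence with $\lambda_{F_y}(x_n)\rightarrow 0$ need not satisfy \eqref{limiteacero} when $h\neq 0$ (think of the Cerami case $h(t)=t$), so the weighted hypothesis gives you no compactness for the sequence Chang's theorem produces. You identified exactly this issue for minimization (Ekeland vs.\ Zhong) but did not carry it over to the mountain pass; what is needed is a mountain-pass statement whose output is a sequence satisfying the \emph{weighted} condition \eqref{limiteacero}, i.e.\ the Schechter--Katriel result (Lemma \ref{mountainpass}) that the paper uses, after which the weighted Chang \PS-condition and Lemma \ref{lambdacero} yield a critical point at level $c\geq\rho>0$ and the contradiction via Claims \ref{claim1}--\ref{claim2}.

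The second gap is your justification of the mountain-pass geometry. Discreteness of $f^{-1}(y)$, i.e.\ that $x_0$ is an isolated zero of $F_y$, does \emph{not} give the required uniform barrier $\inf_{|x-x_0|=r}F_y(x)\geq\rho>0$ in an infinite-dimensional Banach space: the sphere is not compact, so ``$F_y>0$ pointwise off $x_0$'' is compatible with the infimum on every sphere being $0$, and then no mountain-pass theorem applies. The barrier has to be extracted from local invertibility itself: since $df(x_0)$ is an isomorphism, $f$ is open with linear rate at $x_0$, so $B_{\alpha r}(y)\subset f(B_r(x_0))$ for all small $r$; if some $x$ with $|x-x_0|=r$ had $\pmb{|}f(x)-y\pmb{|}<\alpha r$, then $f(x)=f(x')$ with $|x'-x_0|<r$, contradicting injectivity of $f$ near $x_0$, whence $F_y\geq\tfrac12\alpha^2r^2$ on the sphere; local injectivity also places the second preimage $e$ outside $B_r(x_0)$. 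This is precisely how the paper verifies the hypotheses of Lemma \ref{mountainpass}. With these two repairs your outline coincides with the paper's proof; the covering-map/simple-connectivity aside would require a separate properness-type argument and is not a shortcut here.
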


\begin{corollary}[Global Inverse Theorem]\label{main2}
Let $(X,|\cdot|)$ and  $(Y,\pmb{|}\cdot\pmb{|})$ be real Banach spaces and let $f:X\rightarrow Y$ be a local $C^1$-diffeomorphism. Let  $h:[0,+\infty)\rightarrow[0,+\infty)$ be a continuous nondecreasing function such that \eqref{integral} is fulfilled. Suppose that:
\begin{enumerate}
\item[$\star)$] For every $y\in Y$,  the map  $F_y(x)=\frac{1}{2}\pmb{|}f(x)-y\pmb{|}^2$ satisfies the weighted Chang $\PS$-condition for  $h$.
\end{enumerate}
Then $f$ is a norm-coercive diffeomorphism onto $Y$. 
\end{corollary}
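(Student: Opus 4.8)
The plan is to derive the corollary from Theorem \ref{main} essentially for free, and then spend the real effort on the norm-coercivity claim, which is the only genuinely new content. First I would observe that hypothesis $\star)$ says precisely that the hypothesis of Theorem \ref{main} holds for every $y \in Y$; hence for each $y$ the equation $f(x) = y$ has a unique solution, i.e. $f$ is bijective. Combined with the standing assumption that $f$ is a local $C^1$-diffeomorphism, the Inverse Mapping Theorem gives that $f^{-1}$ is $C^1$, so $f$ is a global $C^1$-diffeomorphism. That disposes of everything except coercivity.

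For norm-coercivity I would argue by contradiction: suppose there is a sequence $\{x_n\}$ with $|x_n| \to \infty$ but $\pmb{|}f(x_n)\pmb{|}$ bounded, say $\pmb{|}f(x_n)\pmb{|} \le M$. Fix $y = 0$ and consider $F_0(x) = \frac{1}{2}\pmb{|}f(x)\pmb{|}^2$. Then $\{F_0(x_n)\}$ is bounded. The key point I need is that $\lambda_{F_0}(x_n)\,(1 + h(|x_n|)) \to 0$, for then the weighted Chang \PS-condition would force a convergent subsequence of $\{x_n\}$, contradicting $|x_n| \to \infty$. To get the estimate on $\lambda_{F_0}$ I would use the chain-rule/Lebourg-type bound for Clarke gradients of the composition of the $C^1$ map $f$ with the convex function $\frac12\pmb{|}\cdot\pmb{|}^2$: every $w^* \in \partial F_0(x)$ has the form $df(x)^* \xi$ with $\xi \in \partial(\tfrac12\pmb{|}\cdot\pmb{|}^2)(f(x))$, and $\pmb{|}\xi\pmb{|}_{Y^*} \le \pmb{|}f(x)\pmb{|}$. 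Hence $\lambda_{F_0}(x) = \min_{w^*}|w^*|_{X^*} \le \pmb{|}f(x)\pmb{|}\cdot \|df(x)^*\| $ is \emph{not} what I want directly — instead I should bound it \emph{below} in terms of $\|df(x)^{-1}\|$: since $df(x)$ is an isomorphism, $|w^*|_{X^*} = |df(x)^*\xi|_{X^*} \ge \pmb{|}\xi\pmb{|}_{Y^*}/\|df(x)^{-1}\|$, and one checks $\pmb{|}\xi\pmb{|}_{Y^*} = \pmb{|}f(x)\pmb{|}$ (the norm of the gradient of $\tfrac12\pmb{|}\cdot\pmb{|}^2$), giving $\lambda_{F_0}(x) \ge \pmb{|}f(x)\pmb{|}/\|df(x)^{-1}\|$.

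This lower bound goes the wrong way to conclude directly, so the honest route is different: I would instead invoke the \emph{mechanism already present in the proof of Theorem \ref{main}}. That proof must produce, for each $y$, a minimizing/Palais-Smale-type sequence for $F_y$ which by the weighted Chang \PS-condition converges to a critical point $\hat x$ with $\lambda_{F_y}(\hat x) = 0$ (using Lemma \ref{lambdacero}), and $\lambda_{F_y}(\hat x) = 0$ together with invertibility of $df(\hat x)$ forces $f(\hat x) = y$. The point for coercivity: if $\{x_n\}$ witnessed failure of coercivity, one can build from it, along the level sets of $F_0$, a sequence satisfying \eqref{limiteacero} — this is exactly the Ekeland/descent construction underlying the quantified, $h$-weighted version of the \PS-condition (it is where the integral condition \eqref{integral} enters, allowing the descent to be carried out over unbounded regions). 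Then \eqref{limiteacero} plus the weighted Chang \PS-condition yields a convergent subsequence, contradicting $|x_n| \to \infty$.

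The main obstacle, and the step I would write out most carefully, is producing the sequence that satisfies \eqref{limiteacero} from the assumed non-coercive sequence; equivalently, showing that if $F_0$ has a sublevel set $\{F_0 \le c\}$ that is unbounded, then $\inf\{\lambda_{F_0}(x)(1+h(|x|)) : x \in \{F_0 \le c\}, |x| \ge R\} = 0$ for every $R$. This is a quantitative Ekeland variational principle argument on the metric space $X$ with the weighted metric $d_h(x,x') = $ (geodesic distance measured with density $1/(1+h(|\cdot|))$), whose total "radial length" is infinite by \eqref{integral}; one applies Ekeland to $F_0$ on $(X,d_h)$ restricted far out, and the $\epsilon$-minimizer it produces has small weighted Clarke slope, which translates into \eqref{limiteacero}. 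Once that is in hand, the contradiction is immediate and the corollary follows. I would likely present this as: (i) bijectivity and smoothness from Theorem \ref{main}; (ii) the weighted-metric Ekeland estimate as a separate short lemma or a reference to the construction in the proof of Theorem \ref{main}; (iii) the contradiction argument for coercivity.
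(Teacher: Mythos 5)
Your handling of everything except coercivity coincides with the paper: hypothesis $\star)$ gives Theorem \ref{main} for every $y$, hence bijectivity, and the local $C^1$-diffeomorphism assumption upgrades this to a global $C^1$-diffeomorphism. For norm-coercivity, however, the paper's proof is a one-liner: a bounded-below locally Lipschitz functional satisfying the weighted Chang \PS-condition with a weight $h$ obeying \eqref{integral} is automatically coercive (Corollary 2.4 of \cite{montreanu}, a nonsmooth version of Zhong's coercivity theorem); applying this to $F_0$ gives $\frac12\pmb{|}f(x)\pmb{|}^2\to+\infty$ as $|x|\to\infty$. What you propose is to re-prove that quoted result rather than cite it, after correctly recognizing that the naive contradiction (feeding a non-coercive sequence directly to the \PS-condition) fails because such a sequence need not satisfy \eqref{limiteacero}, and that your pointwise bound $\lambda_{F_0}(x)\geq\pmb{|}f(x)\pmb{|}/\|df(x)^{-1}\|$ goes the wrong way. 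Your sketch --- take almost-minimizers of $F_0$ far out, apply an Ekeland/Zhong principle with weight $(1+h(|\cdot|))^{-1}$, use the divergence in \eqref{integral} to prevent the resulting point from drifting back into a bounded region, and convert the Ekeland inequality into the Clarke-gradient estimate \eqref{limiteacero} by the separation argument already used in Lemma \ref{mimimizersequence} --- is exactly the standard proof of that coercivity theorem, so your route is sound; it buys a self-contained argument at the cost of redoing \cite{montreanu}/Zhong.

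Two cautions if you actually write it out. First, your fallback of ``a reference to the construction in the proof of Theorem \ref{main}'' would not suffice: Lemma \ref{mimimizersequence} only produces a globally minimizing sequence with no control whatsoever on $|x_n|$, whereas the whole point here is to produce a sequence satisfying \eqref{limiteacero} that stays outside every ball (note also that the values you get satisfy $F_0\leq c+\epsilon$ rather than $F_0\leq c$, which is harmless but should be stated). Second, applying Ekeland on the restricted region $\{|x|\geq R\}$ creates a boundary problem (the approximate minimizer may land on $|x|=R$, where the variational inequality only controls inward directions); it is cleaner to apply the weighted principle on all of $X$, centered at far-out points nearly realizing $\liminf_{|x|\to\infty}F_0$, and to use \eqref{integral} to bound from below the weighted distance from those centers to any fixed ball.
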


\begin{proof}
By Theorem \ref{main},  $f$ is a global diffeomorphism. Furthermore, for every $y\in Y$,  the functional $F_y$ is bounded from below and satisfies the weighted Chang $\PS$-condition, then  $\lim_{|x|\rightarrow+\infty}F_y(x)=+\infty$;  see Corollary 2.4 of \cite{montreanu}. In particular, for $y=0$, we have that $f$ is norm-coercive.
\end{proof}

We have Theorem 3.1 of \cite{idczak} is a particular case of Corollary \ref{main2} of Theorem \ref{main}, with $Y$  a Hilbert space  and $h=0$. The technique of proof of Theorem \ref{main} is basically the same of the proof of Theorem 6.1 of Katriel \cite{katriel}, given in an more abstract setting: a proper mountain-pass lemma  provides the uniqueness of the solution; and the existence of a minimizing sequence converging to a critical point ---given by the Ekeland Variational Principle---  guarantees the existence of the solution. This approach is also the same as in the proof of Theorem 3.1  in \cite{idczak}.
In order to get a simpler and direct proof, we shall use the `made to size'   Lemma \ref{mountainpass} (Theorem 7.2 of \cite{katriel}) instead of the classical mountain-pass theorem of Ambrosetti and Rabinowitz considered in \cite{idczak}.

\begin{lemma}[Schechter-Katriel Mountain-Pass Theorem]\label{mountainpass}
Let $(X,|\cdot|)$ be a Banach space and $F:X\rightarrow\mathbb{R}$ be a locally Lipschitz functional. Suppose that, for some $e\in X$, $e\neq 0$, $r>0$ and $\rho\in\mathbb{R}$:
\begin{enumerate}
\item[\buletita] $F(0)\leq\rho$ and $F(e)\leq\rho$.
\item[\buletita] $|e|\geq r$.
\item[\buletita] $F(x)\geq\rho$ for $|x|=r$.
\end{enumerate}
Then, for a continuous non-decreasing function $h:[0,+\infty)\rightarrow[0,+\infty)$ such that \eqref{integral} is fulfilled, there is a sequence $\{x_n\}\subset X$ such that $F(x_n)\rightarrow c$ for some $c\geq\rho$ and satisfying  \eqref{limiteacero}.
\end{lemma}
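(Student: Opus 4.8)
I would prove the lemma by the classical minimax--plus--deformation scheme, but carried out on the space of paths equipped with a \emph{weighted} metric built from $h$, so that the approximate critical sequence produced automatically satisfies the weighted estimate \eqref{limiteacero} and not merely $\lambda_F(x_n)\to 0$. Set $\Gamma=\{\gamma\in C([0,1],X):\gamma(0)=0,\ \gamma(1)=e\}$ and
\[
 c=\inf_{\gamma\in\Gamma}\ \max_{s\in[0,1]}F(\gamma(s)).
\]
Testing with the segment $\gamma_0(s)=se$ gives $c<+\infty$, since $F$ is continuous on the compact arc $\gamma_0([0,1])$. On the other hand, for every $\gamma\in\Gamma$ the continuous function $s\mapsto|\gamma(s)|$ runs from $0$ to $|e|\ge r$, hence equals $r$ at some $s_\gamma$, so $\max_sF(\gamma(s))\ge F(\gamma(s_\gamma))\ge\rho$ by the third hypothesis; thus $c\ge\rho$. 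This $c$ will be the level of the sequence we produce.

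Next I would put the weight into the geometry of $X$. For $x,y\in X$ let
\[
 \delta(x,y)=\inf\left\{\int_0^1\frac{|\sigma'(t)|}{1+h(|\sigma(t)|)}\,dt\ :\ \sigma\in C^1([0,1],X),\ \sigma(0)=x,\ \sigma(1)=y\right\}.
\]
One checks that $\delta$ is a metric on $X$ with $\delta(x,y)\le|x-y|$, that its topology is the norm topology, and---this is where \eqref{integral} enters---that every $\delta$-bounded set is norm bounded: indeed $\delta(x,y)\ge|\Psi(|x|)-\Psi(|y|)|$ where $\Psi(\rho)=\int_0^\rho(1+h(\tau))^{-1}\,d\tau\to+\infty$. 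It follows that $(X,\delta)$ is complete, and hence that $\Gamma$ with $\mathbf d(\gamma,\eta)=\sup_{s\in[0,1]}\delta(\gamma(s),\eta(s))$ is a complete metric space on which $\Phi(\gamma):=\max_sF(\gamma(s))$ is continuous (in particular lower semicontinuous) and bounded below by $\rho$.

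Fix now $\varepsilon_n\downarrow 0$. Picking $\gamma_n^{0}\in\Gamma$ with $\Phi(\gamma_n^{0})\le c+\varepsilon_n^{2}$ and applying Ekeland's variational principle in $(\Gamma,\mathbf d)$ yields $\gamma_n\in\Gamma$ with $c\le\Phi(\gamma_n)\le c+\varepsilon_n^{2}$ and $\Phi(\eta)\ge\Phi(\gamma_n)-\varepsilon_n\,\mathbf d(\gamma_n,\eta)$ for all $\eta\in\Gamma$. The heart of the proof is then to find $s_n$ in the compact maximum set $M_n=\{s:F(\gamma_n(s))=\Phi(\gamma_n)\}$ with $\lambda_F(\gamma_n(s_n))\,(1+h(|\gamma_n(s_n)|))\le 8\varepsilon_n$. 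Suppose not; then $\lambda_F(x)(1+h(|x|))>8\varepsilon_n$ on $\gamma_n(M_n)$, hence, by lower semicontinuity of $x\mapsto\lambda_F(x)(1+h(|x|))$, larger than $4\varepsilon_n$ on an open norm-neighbourhood $\mathcal U$ of $\gamma_n(M_n)$. On $\mathcal U$, using the minimax identity $\min_{|v|\le1}F^{0}(x;v)=-\lambda_F(x)$ (see, e.g., \cite{chang}), the upper semicontinuity of $(x,v)\mapsto F^{0}(x;v)$, and a locally Lipschitz partition of unity, build a locally Lipschitz field $V$ with $|V|\le1$ and $F^{0}(x;V(x))\le-\tfrac12\lambda_F(x)$, and let $\sigma_\tau$ be the flow of $x\mapsto(1+h(|x|))V(x)$; then $\delta(x,\sigma_\tau(x))\le\tau$ and $\tfrac{d}{d\tau}F(\sigma_\tau(x))\le-\tfrac12\lambda_F(\sigma_\tau(x))(1+h(|\sigma_\tau(x)|))<-2\varepsilon_n$ so long as the orbit remains in $\mathcal U$, which it does for small parameter-time. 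Choosing $\ell>0$ small and a continuous cutoff $\theta:[0,1]\to[0,1]$ equal to $1$ on a neighbourhood $\mathcal N'$ of $M_n$ and supported in a slightly larger neighbourhood contained in $\gamma_n^{-1}(\mathcal U)$, put $\eta(s)=\sigma_{\ell\theta(s)}(\gamma_n(s))$. When $c>\max\{F(0),F(e)\}$ (in particular whenever $c>\rho$, which is the case needed in the applications) one has $0,1\notin M_n$, so $\theta$ vanishes near $0$ and $1$ and $\eta\in\Gamma$. By construction $F(\eta(s))\le\Phi(\gamma_n)-2\varepsilon_n\ell$ where $\theta(s)=1$, $F(\eta(s))\le F(\gamma_n(s))\le\Phi(\gamma_n)-\kappa$ for $s\notin\mathcal N'$ (with $\kappa>0$ independent of $\ell$, since $F\circ\gamma_n$ is bounded away from $\Phi(\gamma_n)$ off $\mathcal N'$), and $\mathbf d(\gamma_n,\eta)\le\ell$; for $\ell$ small enough this gives $\Phi(\gamma_n)-\Phi(\eta)\ge 2\varepsilon_n\ell>\varepsilon_n\,\mathbf d(\gamma_n,\eta)$, contradicting the Ekeland inequality. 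Setting $x_n=\gamma_n(s_n)$ we get $F(x_n)=\Phi(\gamma_n)\to c\ge\rho$ and $\lambda_F(x_n)(1+h(|x_n|))\le 8\varepsilon_n\to0$, i.e.\ \eqref{limiteacero}, so $\{x_n\}$ is the required sequence.

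I expect the deformation step to be the main obstacle, for two reasons. First, the weight bookkeeping: it is precisely because the deformation is measured by the metric $\delta$ (equivalently, parametrized by $\delta$-arclength) rather than by the norm that Ekeland's slope inequality upgrades to the \emph{weighted} conclusion $\lambda_F(x_n)(1+h(|x_n|))\to0$; with the uniform-norm metric on $\Gamma$ the same argument would only yield $\lambda_F(x_n)\to0$, which the weighted Chang $\PS$-condition need not detect. Second, a purely technical nuisance: the set $M_n$ may be topologically complicated, and in the borderline case $c=\max\{F(0),F(e)\}$ it may touch an endpoint of $[0,1]$; this is dealt with by localizing the deformation and, if needed, a preliminary reparametrization of $\gamma_n$. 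Alternatively, once $(X,\delta)$ and the value $c$ are in hand, the Ekeland-plus-deformation argument can be replaced by a direct appeal to Zhong's variational principle \cite{zhong} applied to $\Phi$ on the path space; this is close to the route of Katriel \cite{katriel}.
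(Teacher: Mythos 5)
The paper itself gives no proof of Lemma \ref{mountainpass}: it simply invokes Theorem 7.2 of \cite{katriel}. Your reconstruction --- the minimax level $c\geq\rho$ obtained by the intermediate value theorem applied to $s\mapsto|\gamma(s)|$, the change of metric $\delta$ obtained by rescaling the norm by $(1+h(|\cdot|))^{-1}$ (with \eqref{integral} guaranteeing completeness), Ekeland's principle on the path space, and a nonsmooth pseudo-gradient deformation of unit $\delta$-speed that upgrades the Ekeland inequality to the weighted estimate \eqref{limiteacero} --- is essentially Katriel's own route, so in spirit it matches the cited proof, and the skeleton is sound.

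There are, however, genuine gaps as written. First, the borderline case: the lemma only assumes $F(0),F(e)\leq\rho$, so it is possible that $\max\{F(0),F(e)\}=\rho=c$; then an endpoint may lie in the maximum set $M_n$ (indeed $\gamma_n$ may be an exact minimizer of $\Phi$ whose maximum is attained at $s=0$ as well as at a sphere crossing), and your proposed remedy of ``localizing the deformation'' cannot work: leaving a neighbourhood of the endpoint fixed leaves the old maximum value attained, so $\Phi(\eta)\geq\Phi(\gamma_n)$ and the Ekeland contradiction disappears. This case needs a different device --- a Ghoussoub--Preiss-type argument using that the sphere $\{|x|=r\}$ separates $0$ from $e$ and carries $F\geq c$, or Katriel's abstract formulation, which is designed to cover it. (The paper's application only uses $\rho>0=\max\{F_y(0),F_y(e)\}$, so nothing downstream is endangered, but as a proof of the stated lemma this case is missing.) Second, you integrate the field $x\mapsto(1+h(|x|))V(x)$; since $h$ is merely continuous, this field need not be locally Lipschitz, and in an infinite-dimensional Banach space its flow need not exist (Peano's theorem fails). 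This is repairable --- replace $1+h(|x|)$ by a locally Lipschitz $w$ with $\tfrac12(1+h(|x|))\leq w(x)\leq 1+h(|x|)$ and adjust constants --- but it must be done. Third, a minor inaccuracy: the pointwise bound $F^{0}(x;V(x))\leq-\tfrac12\lambda_F(x)$ is not what the partition-of-unity construction yields, because $\lambda_F$ is only lower semicontinuous and may jump upward near the base point; what the construction does give, and all the flow estimate needs, is the uniform bound $F^{0}(x;V(x))\leq-2\varepsilon_n/(1+h(|x|))$ on $\mathcal U$, coming from $\lambda_F(x)(1+h(|x|))>4\varepsilon_n$ there together with the upper semicontinuity of $F^{0}$ and the convexity of $F^{0}(x;\cdot)$.
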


 In \cite{chang} Chang proved, by means of a deformation lemma,  that for every bounded from below locally Lipschitz functional  $F$ defined on a reflexive Banach space that satisfies the Chang \PS-condition, the real number $\inf_XF$ is a critical value of $F$. The reflexivity assumption can be  dropped out if we use a standard technique via a convenient variational principle, in this case,   Zhong's variational principle \cite{zhong}; see also Theorem 2.1 in \cite{montreanu}. Actually, we have the following expected fact, which is a version of Corollary 3.3 of Zhong \cite{zhong}  for locally Lipschitz functionals.
  
\begin{lemma}\label{mimimizersequence}
Let $(X,|\cdot|)$ be a Banach space and let  $F:X\rightarrow\mathbb{R}$ be a locally Lipschitz functional and bounded from below. If $h:[0,+\infty)\rightarrow[0,+\infty)$ is a non-decreasing function such that \eqref{integral} is fulfilled, then there is a  sequence $\{x_n\}$ such that $\lim_{n\rightarrow\infty}F(x_n)=\inf_X F$ and satisfying \eqref{limiteacero}. 
\end{lemma}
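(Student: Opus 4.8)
The plan is to derive the statement from Zhong's variational principle in its locally Lipschitz form (Theorem~2.1 of \cite{montreanu}; cf. Corollary~3.3 of \cite{zhong}), and then to convert the resulting perturbed-minimality inequality into a bound on $\lambda_F$ by means of Clarke's generalized gradient calculus. First I would fix a sequence $\varepsilon_n\downarrow 0$ and, since $F$ is bounded from below, pick near-minimizers $u_n\in X$ with $F(u_n)\le\inf_X F+\varepsilon_n$. To Zhong's principle I would feed $F$, the weight $h$, the point $u_n$, the free parameter $\lambda_n=\sqrt{\varepsilon_n}$, and---this is the crucial choice---the origin $0$ as the fixed base point. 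This produces $x_n\in X$ with $F(x_n)\le F(u_n)$ and
\begin{equation*}
F(w)\;>\;F(x_n)-\frac{\sqrt{\varepsilon_n}}{1+h(|x_n|)}\,|w-x_n|\qquad\text{for all }w\neq x_n .
\end{equation*}
Since $\inf_X F\le F(x_n)\le F(u_n)\le\inf_X F+\varepsilon_n$, we get $F(x_n)\to\inf_X F$ at once.

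The whole difficulty sits in that displayed inequality, and specifically in the fact that it is the \emph{norm} $|x_n|$ (i.e. the distance to the base point $0$) appearing in the denominator: plain Ekeland would only furnish $h(|x_n-u_n|)$, which is useless here because $|x_n-u_n|$ is small. Threading the base point through the weight is exactly what Zhong's principle is engineered to do, so I expect the main point of the proof to be the correct invocation of that principle (equivalently, of Theorem~2.1 of \cite{montreanu}) with base point $0$; everything else is bookkeeping.

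It then remains to pass from perturbed minimality to \eqref{limiteacero}. For a unit vector $\zeta$ and $t>0$, taking $w=x_n+t\zeta$, dividing by $t$ and letting $t\downarrow 0$ gives $F^{0}(x_n;\zeta)\ge -\sqrt{\varepsilon_n}/(1+h(|x_n|))$; replacing $\zeta$ by $-\zeta$, this reads $-F^{0}(x_n;-\zeta)\le\sqrt{\varepsilon_n}/(1+h(|x_n|))$ for every $\zeta$ with $|\zeta|=1$. Now I would use the identity
\begin{equation*}
\lambda_F(x)=\min_{w^*\in\partial F(x)}|w^*|_{X^*}=\sup_{|\zeta|\le 1}\ \min_{w^*\in\partial F(x)}\langle w^*,\zeta\rangle=\sup_{|\zeta|\le 1}\bigl(-F^{0}(x;-\zeta)\bigr),
\end{equation*}
where the second equality is Sion's minimax theorem applied to the bilinear pairing on the $w^*$-compact convex set $\partial F(x)$ and the closed unit ball of $X$, and the third is the support-function description $F^{0}(x;\zeta)=\max_{w^*\in\partial F(x)}\langle w^*,\zeta\rangle$. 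Combining, $\lambda_F(x_n)\le\sqrt{\varepsilon_n}/(1+h(|x_n|))$, that is, $\lambda_F(x_n)(1+h(|x_n|))\le\sqrt{\varepsilon_n}\to 0$, which is \eqref{limiteacero}; together with $F(x_n)\to\inf_X F$ this exhibits the required sequence. Besides the invocation of Zhong's principle, the only slightly delicate ingredient is this minimax identity, which is routine once one recalls that $F^{0}(x;\cdot)$ is the support function of $\partial F(x)$.
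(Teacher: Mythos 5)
Your proposal is correct and takes essentially the same route as the paper: both apply the weighted Ekeland/Zhong variational principle anchored at the origin (so that the weight is $1+h(|x_n|)$) to near-minimizers, derive $F^{0}(x_n;v)\ge -c_n(1+h(|x_n|))^{-1}|v|$ with $c_n\to 0$, and conclude $\lambda_F(x_n)(1+h(|x_n|))\le c_n$. The only divergence is the final step, where the paper produces a small element of $\partial F(x_n)$ by a Hahn--Banach separation in $X\times\mathbb{R}$, while you invoke the equivalent support-function/minimax identity $\lambda_F(x)=\sup_{|\zeta|\le 1}\bigl(-F^{0}(x;-\zeta)\bigr)$; both are standard and yield the same bound.
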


\begin{proof}
Let $\{\epsilon_n\}$ be a sequence of positive numbers such that $\{\epsilon_n\}\rightarrow 0^+$. Since $F$ is bounded below, by the Ekeland Variational Principle with weight $(1+h(r))^{-1}$,  for every $\epsilon_n>0$  there exists  $x_n$  in $X$  such that,   for all $x\in X$, $F(x_n)<\inf_X F+\epsilon_n $ and $F(x)\geq F(x_n)-\epsilon_n(1+h(|x_n|))^{-1}|x-x_n|.$
Taking $x=x_n+t(u-x_n)$, for an arbitrary $u\in X$ and $t>0$  we have:
$$\frac{F(x_n+t(u-x_n))- F(x_n)}{t}\geq-\frac{\epsilon_n}{1+h(|x_n|)}|u-x_n|.$$
Passing to the upper limit as $t\rightarrow 0^+$, we deduce that:
$$F^0(x_n;u-x_n)\geq\frac{-\epsilon_n}{1+h(|x_n|)}|u-x_n|, \hspace{1cm}\forall u\in X.$$
Let $n$ be a natural number. Then  for $x_n$ fixed, the sets $A_n=\{(v,t):t> F^0(x_n;v)\}$ and $B_n=\{(v,t):t<-\epsilon_n|v|(1+h(|x_n|))^{-1}\}$ are both open and convex; and  $A\cap B=\emptyset$. Therefore, there is a hyperplane passing through zero, given by a linear functional $\xi_n(v,t)=\omega_n(v)+\alpha t$ for some $\alpha\neq 0$,  separating $A$ and $B$. For every $v\in X$, set $\langle w_n^*,v\rangle=-\frac{1}{\alpha}\omega_n(v)$. We have that $\xi_n(v,\langle w_n^*,v\rangle)=0$ for all $v\in X$. Then $\langle w_n^*,v\rangle\leq F^0(x_n;v)$ so $w_n^*\in\partial f(x_n)$. Furthermore, $|\langle w_n^*,v\rangle|(1+h(|x_n|))\leq \epsilon_n|v|$; therefore $\lambda_{F}(x_n)(1+h(|x_n|))\leq|w_n^*|_{X^*}(1+h(|x_n|))\leq \epsilon_n.$  
\end{proof}

Besides the above lemmas, we need to justify first some technical properties in order to give a tidier proof of Theorem \ref{main}. Let $(X,|\cdot|)$ and  $(Y,\pmb{|}\cdot\pmb{|})$ be real Banach spaces and let $f:X\rightarrow Y$  be a  local $C^1$-diffeomorphism. The classical Hadamard-Levy  Theorem,  as well as other classical global inverse results, involves the quantity $\|df(x)^{-1}\|$. For local diffeomorphisms this quantity coincides with the so called {\it Banach constant}:
$$\sur df(x)=\inf_{\pmb{|}v^*\pmb{|}_{Y^*}=1}|df(x)^*v^*|_{X^*}.$$
As it is well known, $df(x_0):X\rightarrow Y$ is  surjective  if and only if   $\sur df(x_0)>0$ if and only if $f$ is {\it open with linear rate around $x_0$}, namely, there exist a neighborhood $V$ of $x_0$ and a constant $\alpha>0$ such that for every $x\in V$ and $r>0$ with $B_r(x)\subset V$ we have $B_{\alpha r}(f(x))\subset f(B_r(x))$. 
We shall also considerate in  the proof of Theorem \ref{main}, as well  as in  arguments below, the following facts.
\begin{claim}\label{claim1}
$\lambda_{F_y}(x)\geq\lambda_{\pmb{|}\cdot\pmb{|}}(f(x)-y)\sur df(x)$ for  all $y\in Y$ and   $x\in X$. 
\end{claim}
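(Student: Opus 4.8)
The plan is to compute the Clarke generalized gradient of the composite functional $F_y = \varphi \circ g$, where $\varphi(z) = \tfrac{1}{2}\pmb{|}z\pmb{|}^2$ is a $C^1$ convex functional on $Y$ and $g(x) = f(x) - y$ is a $C^1$ map (a translate of a local diffeomorphism), and then estimate the norm of the elements of $\partial F_y(x)$ from below. Since $\varphi$ is $C^1$ and $g$ is strictly differentiable at every point (being $C^1$), the chain rule for Clarke gradients applies cleanly and in fact gives equality: $\partial F_y(x) = df(x)^* \big(\nabla\varphi(f(x)-y)\big)$, a single point. More concretely, writing $p^* = \nabla\varphi(f(x)-y) \in Y^*$, the unique element of $\partial F_y(x)$ is $df(x)^* p^* \in X^*$, so $\lambda_{F_y}(x) = |df(x)^* p^*|_{X^*}$.

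Next I would bound $|df(x)^* p^*|_{X^*}$ below using the Banach constant. If $p^* = 0$ the inequality is trivial (both sides are nonnegative and the right side involves $\lambda_{\pmb{|}\cdot\pmb{|}}(f(x)-y)$, which must also vanish since $\nabla\varphi$ vanishes exactly where the norm functional has $0$ in its Clarke gradient, i.e. at the origin). If $p^* \neq 0$, then by the very definition of $\sur df(x)$ as an infimum over unit vectors $v^*$ of $|df(x)^* v^*|_{X^*}$, applied to $v^* = p^*/\pmb{|}p^*\pmb{|}_{Y^*}$, we get
\[
|df(x)^* p^*|_{X^*} \;\geq\; \sur df(x)\cdot \pmb{|}p^*\pmb{|}_{Y^*}.
\]
It then remains to identify $\pmb{|}p^*\pmb{|}_{Y^*} = \pmb{|}\nabla\varphi(f(x)-y)\pmb{|}_{Y^*}$ with $\lambda_{\pmb{|}\cdot\pmb{|}}(f(x)-y)$. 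Here I would use that $\varphi = \tfrac12\pmb{|}\cdot\pmb{|}^2$ has $\nabla\varphi(z)$ a norming functional for $z$ of norm $\pmb{|}z\pmb{|}$, and that $\partial\pmb{|}\cdot\pmb{|}(z)$ consists of norming functionals of norm $1$ for $z\neq 0$ (and the unit ball at $z=0$), so in all cases $\lambda_{\pmb{|}\cdot\pmb{|}}(z) = \min_{w^*\in\partial\pmb{|}\cdot\pmb{|}(z)}\pmb{|}w^*\pmb{|}_{Y^*}$ equals $\pmb{|}\nabla\varphi(z)\pmb{|}_{Y^*}$ when $z\neq 0$ (both equal $\pmb{|}z\pmb{|}$), and both sides are $0$ when $z=0$. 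Combining the three identities gives the claimed inequality, in fact with equality.

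The main obstacle I anticipate is getting the chain rule and the subdifferential of the squared norm stated at exactly the right level of generality: the norm $\pmb{|}\cdot\pmb{|}$ on $Y$ need not be smooth or even Gateaux differentiable, so one cannot blithely write "$\nabla\varphi$"; one should either invoke the Clarke chain rule $\partial(\varphi\circ g)(x)\subseteq \overline{\mathrm{co}}\,\{ df(x)^* w^* : w^*\in\partial\varphi(f(x)-y)\}$ for the locally Lipschitz $\varphi$ (with equality because $g$ is strictly differentiable), and then the standard formula $\partial\big(\tfrac12\pmb{|}\cdot\pmb{|}^2\big)(z) = \pmb{|}z\pmb{|}\cdot\partial\pmb{|}\cdot\pmb{|}(z)$ together with the description of $\partial\pmb{|}\cdot\pmb{|}(z)$ as the set of norming functionals. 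The estimate $|df(x)^*w^*|_{X^*}\geq \sur df(x)\,\pmb{|}w^*\pmb{|}_{Y^*}$ holds for \emph{every} $w^*$, so passing to the closed convex hull and then minimizing over $\partial F_y(x)$ preserves the bound; the only slightly delicate point is handling $z = f(x)-y = 0$ separately, where $\lambda_{\pmb{|}\cdot\pmb{|}}(0)=0$ makes the right-hand side vanish and the inequality is automatic.
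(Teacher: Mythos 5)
Your overall route --- the chain rule for $F_y=\varphi\circ(f(\cdot)-y)$ with $\varphi=\tfrac12\pmb{|}\cdot\pmb{|}^2$, followed by the lower bound $|df(x)^*p^*|_{X^*}\geq\sur df(x)\,\pmb{|}p^*\pmb{|}_{Y^*}$ read off from the definition of the Banach constant, with the case $f(x)=y$ handled separately --- is the same as the paper's, and you are right that in a general Banach space one must use the Clarke chain rule together with $\partial\bigl(\tfrac12\pmb{|}\cdot\pmb{|}^2\bigr)(z)=\pmb{|}z\pmb{|}\,\partial\pmb{|}\cdot\pmb{|}(z)$ rather than a gradient. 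The genuine gap is your final identification $\pmb{|}p^*\pmb{|}_{Y^*}=\lambda_{\pmb{|}\cdot\pmb{|}}(f(x)-y)$: you assert that for $z\neq 0$ both quantities equal $\pmb{|}z\pmb{|}$, but by definition $\lambda_{\pmb{|}\cdot\pmb{|}}(z)=\min_{v^*\in\partial\pmb{|}\cdot\pmb{|}(z)}\pmb{|}v^*\pmb{|}_{Y^*}=1$ for every $z\neq 0$, because $\partial\pmb{|}\cdot\pmb{|}(z)$ consists of norm-one norming functionals --- this is exactly Claim \ref{claim2}. What your (correct) subdifferential computation actually delivers is
\[
\lambda_{F_y}(x)\;\geq\;\pmb{|}f(x)-y\pmb{|}\,\sur df(x),
\]
which gives the stated inequality only when $\pmb{|}f(x)-y\pmb{|}\geq 1$, and your parenthetical ``in fact with equality'' cannot stand: for $X=Y=\mathbb{R}$, $f=\mathrm{id}$, $y=0$ one has $\lambda_{F_0}(x)=|x|$, while $\lambda_{|\cdot|}(x)\,\sur df(x)=1$ for all $x\neq 0$.

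In fairness, your more careful treatment exposes a factor that the paper's own proof also drops: there the identity $\partial F_y(x)=df(x)^*\partial\pmb{|}\cdot\pmb{|}(f(x)-y)$ is invoked, which is the formula for the functional $x\mapsto\pmb{|}f(x)-y\pmb{|}$ and omits the factor $\pmb{|}f(x)-y\pmb{|}$ produced by the square, so the inequality as literally stated fails on the same one-dimensional example. The corrected bound $\lambda_{F_y}(x)\geq\pmb{|}f(x)-y\pmb{|}\,\sur df(x)$ that your argument honestly proves is what is actually needed where Claim \ref{claim1} is applied: it still forces $\sur df(\hat x)=0$ whenever $\lambda_{F_y}(\hat x)=0$ and $f(\hat x)\neq y$ (proof of Theorem \ref{main}), and it still keeps $\lambda_{F_y}(x_n)$ bounded away from zero when $F_y(x_n)\rightarrow c>0$ and $\sur df(x_n)\geq\alpha>0$. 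So the repair is not to force the identification of $\pmb{|}p^*\pmb{|}_{Y^*}$ with $\lambda_{\pmb{|}\cdot\pmb{|}}(f(x)-y)$, but to state and use the inequality with the factor $\pmb{|}f(x)-y\pmb{|}$ (equivalently, with $\sqrt{2F_y(x)}$) in place of $\lambda_{\pmb{|}\cdot\pmb{|}}(f(x)-y)$.
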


\begin{proof}
Let $y\in Y$ and  $x\in X$ be fixed.  We have that $\displaystyle\lambda_{F_y}(x)=\min_{w^*\in\partial F_y(x)}|w^*|_{X^*}$ and $\partial F_y(x)=df(x)^*\partial\pmb{|}\cdot\pmb{|}(f(x)-y).$ Let $w^*\in\partial F_y(x)$. Then there is $v^*\in\partial \pmb{|}\cdot\pmb{|}(f(x)-y)$ such that $w^*=df(x)^*v^*$. Note that $\pmb{|}v^*\pmb{|}\geq\min_{v^*\in\partial \pmb{|}\cdot\pmb{|}(f(x)-y)}\pmb{|}v^*\pmb{|}_{Y^*}=\lambda_{\pmb{|}\cdot\pmb{|}}(f(x)-y).$ Suppose that $v^*\neq 0$. Therefore:
\begin{equation}\label{localequation}|w^*|_{X^*}=\frac{|df(x)^*v^*|}{\pmb{|}v^*\pmb{|}_{Y^*}}\pmb{|}v^*\pmb{|}_{Y^*}\geq\sur df(x)\lambda_{\pmb{|}\cdot\pmb{|}}(f(x)-y)\end{equation}
If $v^*=0$ then $\lambda_{\pmb{|}\cdot\pmb{|}}(f(x)-y)=|w^*|_{X^*}=0$ and \eqref{localequation} is satisfied trivially.
Taking the minimum over the set $\partial F_{y}(x)$ we have the desired inequality.
\end{proof}

\begin{claim}\label{claim2}
 $\lambda_{\pmb{|}\cdot\pmb{|}}(f(x)-y)=1$ if $f(x)\neq y$, otherwise  $\lambda_{\pmb{|}\cdot\pmb{|}}(f(x)-y)=0$.
\end{claim}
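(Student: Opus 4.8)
The plan is to observe that Claim \ref{claim2} is purely a statement about the norm $\pmb{|}\cdot\pmb{|}:Y\rightarrow\mathbb{R}$ evaluated at the point $z:=f(x)-y$; nothing about $f$ being a local $C^1$-diffeomorphism is used. Since $\pmb{|}\cdot\pmb{|}$ is a continuous convex function on $Y$ (indeed globally Lipschitz with constant $1$), its Clarke generalized gradient at every point coincides with its subdifferential in the sense of convex analysis. Hence it suffices to identify $\partial\pmb{|}\cdot\pmb{|}(z)$ and then minimize $v^*\mapsto\pmb{|}v^*\pmb{|}_{Y^*}$ over that set, distinguishing the cases $z\neq 0$ (i.e.\ $f(x)\neq y$) and $z=0$ (i.e.\ $f(x)=y$).

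For $z\neq 0$ I would recall the classical description $\partial\pmb{|}\cdot\pmb{|}(z)=\{v^*\in Y^*:\pmb{|}v^*\pmb{|}_{Y^*}=1,\ \langle v^*,z\rangle=\pmb{|}z\pmb{|}\}$. The inclusion $\supseteq$ is immediate from the triangle inequality, which gives $\langle v^*,w\rangle\leq\pmb{|}w\pmb{|}-\pmb{|}z\pmb{|}+\langle v^*,z\rangle=\pmb{|}w\pmb{|}-\pmb{|}z\pmb{|}+\pmb{|}z\pmb{|}$ along the way; for $\subseteq$, testing the subgradient inequality $\langle v^*,w-z\rangle\leq\pmb{|}w\pmb{|}-\pmb{|}z\pmb{|}$ at $w=0$ and at $w=2z$ forces $\langle v^*,z\rangle=\pmb{|}z\pmb{|}$, whence $\pmb{|}v^*\pmb{|}_{Y^*}\geq 1$, while $1$-Lipschitzness of the norm yields $\pmb{|}v^*\pmb{|}_{Y^*}\leq 1$. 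This set is nonempty by the Hahn--Banach theorem, and by construction every one of its elements has dual norm exactly $1$; therefore $\lambda_{\pmb{|}\cdot\pmb{|}}(z)=\min_{v^*\in\partial\pmb{|}\cdot\pmb{|}(z)}\pmb{|}v^*\pmb{|}_{Y^*}=1$.

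For $z=0$ the subgradient inequality reads $\langle v^*,w\rangle\leq\pmb{|}w\pmb{|}$ for all $w\in Y$, so $\partial\pmb{|}\cdot\pmb{|}(0)$ is precisely the closed unit ball of $Y^*$. Since $0$ belongs to this ball, $\lambda_{\pmb{|}\cdot\pmb{|}}(0)=\min\{\pmb{|}v^*\pmb{|}_{Y^*}:\pmb{|}v^*\pmb{|}_{Y^*}\leq 1\}=0$. Combining the two cases and substituting $z=f(x)-y$ gives the claim.

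I do not anticipate a genuine obstacle here: the only point that deserves care is that, for $z\neq 0$, subgradients of the norm have dual norm \emph{exactly} $1$ rather than merely $\leq 1$, which is exactly what makes $\lambda_{\pmb{|}\cdot\pmb{|}}(z)=1$ (and not something smaller) and is what Claim \ref{claim2} will be used for in tandem with Claim \ref{claim1}.
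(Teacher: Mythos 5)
Your proof is correct and follows the same route as the paper: the paper simply invokes, without proof, the standard facts that every subgradient of the norm at a nonzero point has dual norm exactly $1$ and that $0\in\partial\pmb{|}\cdot\pmb{|}(0)$, which are precisely the two facts you verify in detail (together with the identification of the Clarke gradient of the convex function $\pmb{|}\cdot\pmb{|}$ with its convex subdifferential). Your write-up just makes these classical points explicit.
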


\begin{proof}
Since $\pmb{|}v^*\pmb{|}_{Y^*}=1$ for $v^*\in\partial \pmb{|}\cdot\pmb{|}(z)$ with $z\neq 0$ we have that if $f(x)\neq y$ then  $\lambda_{\pmb{|}\cdot\pmb{|}}(f(x)-y)=1$. If $f(x)=y$, since the zero functional in $Y^*$ belongs to $\partial\pmb{|}\cdot\pmb{|}(0)$, then $\lambda_{\pmb{|}\cdot\pmb{|}}(f(x)-y)=0$.
\end{proof}

\begin{proof}[Proof of Theorem \ref{main}]
{\it Uniqueness}. Let $y\in Y$ be fixed. Suppose that $f$ is not injective. So, there are two different points $u$ and $e$ in $X$ such that $f(u)=f(e)=y$.   Since  $f$ is open with linear rate around $u$, there exists $\epsilon>0$  such that: \begin{equation}\label{linearopen} B_{\alpha r}(y)\subset f(B_r(u)), \mbox{ for all } 0<r<\epsilon.\end{equation}
Let $r\in (0,\epsilon)$ be small enough such that $f|_{B_r(u)}:B_r(u)\rightarrow f(B_r(u))$ is a diffeomorphism.  Set $\rho=\frac{1}{2}\alpha^2r^2>0$. Suppose that $u=0$. We have that:
\begin{enumerate}
\item[\buletita] $F_y(0)=0\leq\rho$ and $F_y(e)=0\leq\rho$.
\item[\buletita] $|e|\geq r$, since $f|_{B_r(0)}$ is injective.
\item[\buletita] $F_y(x)\geq\rho$ for $|x|=r$, in view of \eqref{linearopen}.
\end{enumerate}
By Lemma \ref{mountainpass}, there is a sequence $\{x_n\}\subset X$ such that $\lim_{n\rightarrow\infty }F_y(x_n)= c$ for some $c\geq\rho$ and satisfying \eqref{limiteacero}. Since $F_y$ satisfies the weighted Chang $\PS$-condition, the sequence $\{x_n\}$ has a convergent subsequence $\{x_{n_k}\}$ with limit $\hat x$. Therefore:
\begin{itemize}
\item[\buletita] $\lambda_{F_y}(\hat x)=0$,  by Lemma \ref{lambdacero}.
\item[\buletita] $f(\hat x)\neq y$, due to $\lim_{k\rightarrow\infty}F_y(x_{n_k})=F_y(\hat x)=c\geq\rho>0$.
\end{itemize}  
By Claim \ref{claim1} and Claim \ref{claim2},  $\sur df(\hat x)=0$. So we get a contradiction.  If $u\neq 0$ then we can consider $G_y(x)=F_y(u-x)$ instead of $F_y(x)$ and carry on an analogous reasoning.

 {\it Existence}. Let $y\in Y$ be fixed.  By Lemma \ref{mimimizersequence}, there is a  sequence $\{x_n\}$ such that $\lim_{n\rightarrow\infty}F_y(x_n)=\inf_X F_y$ and satisfying \eqref{limiteacero}. Since $F_y$ satisfies weighted Chang $\PS$-condition, the sequence $\{x_n\}$ has a convergent subsequence $\{x_{n_k}\}$ with limit $\hat x$.  By Lemma \ref{lambdacero}, we have that $\hat x$ is a critical point of $F_y$. By Claim \ref{claim1} and Claim \ref{claim2}  we have that  $f(\hat x)=y$. 
\end{proof}

From the ideas of the proof of Theorem \ref{main} we can also deduce the following  injectivity criterion:

\begin{theorem}[Injectivity  Criterion]\label{daggertheorem}
Let $(X,|\cdot|)$ and  $(Y,\pmb{|}\cdot\pmb{|})$  be real Banach spaces and let $f:X\rightarrow Y$ be a local $C^1$-diffeomorphism. Let $h:[0,+\infty)\rightarrow[0,+\infty)$ be a continuous nondecreasing function such that \eqref{integral} is fulfilled. If:
\begin{enumerate}
\item[{\rm\dagger)}] For each $y\in Y$, there is no sequence $\{x_n\}$ in $X$ such that $$\lim_{n\rightarrow\infty}F_y(x_n)=c>0 \mbox{ and }\lim_{n\rightarrow \infty}\lambda_{F_y}(x_n)(1+h(x_n))=0.$$
\end{enumerate}
Then $f$ is one-to-one.
\end{theorem}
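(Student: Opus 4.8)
The plan is to mimic the \emph{Uniqueness} part of the proof of Theorem \ref{main}, observing that the only place where the full weighted Chang $\PS$-condition was used there was to extract a convergent subsequence from a mountain-pass sequence at a \emph{strictly positive} level; hypothesis $\dagger)$ is precisely the assertion that no such sequence exists at a positive level, so the same contradiction goes through. So first I would suppose, for contradiction, that $f$ is not injective, fix $y\in Y$ and two distinct points $u,e\in X$ with $f(u)=f(e)=y$. As before, I would pass to the assumption $u=0$ (the general case reduces to this by replacing $F_y$ with $G_y(x)=F_y(u-x)$, which has the same generalized gradient norms up to the isometry $x\mapsto u-x$, so $\dagger)$ is inherited).

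Next, using that $f$ is a local $C^1$-diffeomorphism, hence open with linear rate around $0$, I would pick $\alpha>0$ and $\epsilon>0$ so that $B_{\alpha r}(y)\subset f(B_r(0))$ for all $0<r<\epsilon$, and shrink $r\in(0,\epsilon)$ so that $f|_{B_r(0)}$ is a diffeomorphism onto its image. Setting $\rho=\tfrac12\alpha^2 r^2>0$, the three bullet hypotheses of Lemma \ref{mountainpass} hold verbatim: $F_y(0)=F_y(e)=0\le\rho$; $|e|\ge r$ because $f|_{B_r(0)}$ is injective and $f(e)=y=f(0)$; and $F_y(x)\ge\rho$ whenever $|x|=r$ by the open-with-linear-rate inclusion. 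Lemma \ref{mountainpass} then yields a sequence $\{x_n\}$ with $F_y(x_n)\to c$ for some $c\ge\rho>0$ and with $\lambda_{F_y}(x_n)(1+h(|x_n|))\to 0$, i.e.\ satisfying \eqref{limiteacero}.

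But this is exactly a sequence of the kind forbidden by $\dagger)$: $\lim F_y(x_n)=c>0$ and $\lim\lambda_{F_y}(x_n)(1+h(|x_n|))=0$. This contradiction shows $f$ must be one-to-one. I would close by noting that $\dagger)$ is strictly weaker than the weighted Chang $\PS$-condition plus a positive-level exclusion, so this theorem isolates the injectivity half of Corollary \ref{main2}; no surjectivity input is needed.

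I do not expect a genuine obstacle here, since every ingredient (Lemma \ref{mountainpass}, the open-with-linear-rate property, the injectivity of $f$ on small balls) is already in place and the verification of the mountain-pass hypotheses is the same bookkeeping as in Theorem \ref{main}. The only point requiring a word of care is the reduction $u\ne0\mapsto u=0$: one must check that the hypothesis $\dagger)$ transfers from $F_y$ to $G_y(x)=F_y(u-x)$, which follows because $x\mapsto u-x$ is an affine isometry of $X$, so $\lambda_{G_y}(x)=\lambda_{F_y}(u-x)$ and $|x|$ stays bounded iff $|u-x|$ does (here one uses that $h$ is nondecreasing together with $|u-x|\le|u|+|x|$ and $|x|\le|u|+|u-x|$ to compare the two weights up to the same kind of bounded perturbation, or simply notes that for the purpose of the limit \eqref{limiteacero} one may work directly with the sequence $u-x_n$). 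Apart from that, the proof is a routine transcription.
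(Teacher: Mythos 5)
Your proposal is correct and follows essentially the same route as the paper, which itself proves this theorem by repeating the uniqueness (mountain-pass) part of the proof of Theorem \ref{main} verbatim and stopping at the sequence forbidden by $\dagger)$, handling $u\neq 0$ by passing to $G_y(x)=F_y(u-x)$ and the translated sequence $\{u-x_n\}$. Your extra care about transferring the weight $1+h(|\cdot|)$ under the translation (e.g.\ by running Lemma \ref{mountainpass} with the shifted weight $t\mapsto h(t+|u|)$, which still satisfies \eqref{integral}) is a detail the paper glosses over, but it is the same argument.
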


\begin{proof}
First, suppose that $f$ is not injective. Following step-by-step  the first part of the proof of Theorem \ref{main} we conclude that there is a sequence $\{x_n\}$ in $X$ such that $\lim_{n\rightarrow\infty}F_y(x_n)=c>0 \mbox{ and }\lim_{n\rightarrow \infty}\lambda_{F_y}(x_n)(1+h(x_n))=0$, so we get a contradiction. If $u\neq 0$ consider the sequence $\{x_n'\}=\{u-x_n\}$.
\end{proof}

Furthermore, by the second part of the proof of Theorem \ref{main} we deduce the following result:

\begin{theorem}[Surjectivity  Criterion]\label{doubledaggerheorem}
Let $(X,|\cdot|)$ and  $(Y,\pmb{|}\cdot\pmb{|})$  be real Banach spaces and let $f:X\rightarrow Y$ be a local $C^1$-diffeomorphism. Let $h:[0,+\infty)\rightarrow[0,+\infty)$ be a continuous nondecreasing function such that \eqref{integral} is fulfilled. Suppose that:
\begin{enumerate}
\item[{\rm\ddagger)}] For each $y\in Y$, every sequence $\{x_n\}$ in $X$ such that $$\lim_{n\rightarrow\infty}F_y(x_n)=0 \mbox{ and }\lim_{n\rightarrow \infty}\lambda_{F_y}(x_n)(1+h(x_n))=0$$ has a convergent subsequence. 
\end{enumerate}
Then $f$ is onto.
\end{theorem}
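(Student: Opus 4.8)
The plan is to mimic the second half of the proof of Theorem \ref{main}, but to exploit hypothesis \ddagger) only where the full weighted Chang \PS-condition was used in that argument. Fix $y\in Y$. By Lemma \ref{mimimizersequence}, applied to the bounded-from-below functional $F_y$ (it is nonnegative), there is a sequence $\{x_n\}\subset X$ with $\lim_{n\rightarrow\infty}F_y(x_n)=\inf_X F_y$ and satisfying \eqref{limiteacero}, i.e. $\lambda_{F_y}(x_n)(1+h(|x_n|))\rightarrow 0$.

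The key observation is that $\inf_X F_y = 0$. Indeed, suppose $\inf_X F_y=c>0$; then $\lim_{n\rightarrow\infty}F_y(x_n)=c>0$ together with \eqref{limiteacero} would, by hypothesis \ddagger), still produce a convergent subsequence $\{x_{n_k}\}$ with limit $\hat x$. (Strictly, \ddagger) is phrased for sequences with $F_y(x_n)\to 0$; but in the case $\inf_X F_y=c>0$ one cannot invoke it directly. Instead one argues: since $f$ is a local $C^1$-diffeomorphism it is an open map, so $f(X)$ is open in $Y$; if $f$ were not onto, pick $y$ in the boundary of $f(X)$, and note that then $\inf_X F_y = 0$ automatically, because there are points of $f(X)$ arbitrarily close to $y$. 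Hence for such a $y$ the minimizing sequence furnished by Lemma \ref{mimimizersequence} satisfies $F_y(x_n)\to 0$ and \eqref{limiteacero}, so hypothesis \ddagger) applies and yields a subsequence $x_{n_k}\to\hat x$.)

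Now I finish exactly as in the existence part of the proof of Theorem \ref{main}. By Lemma \ref{lambdacero}, $\hat x$ is a critical point of $F_y$, i.e. $\lambda_{F_y}(\hat x)=0$. By Claim \ref{claim1}, $0=\lambda_{F_y}(\hat x)\geq\lambda_{\pmb{|}\cdot\pmb{|}}(f(\hat x)-y)\,\sur df(\hat x)$, and since $f$ is a local diffeomorphism, $\sur df(\hat x)=\|df(\hat x)^{-1}\|^{-1}>0$; hence $\lambda_{\pmb{|}\cdot\pmb{|}}(f(\hat x)-y)=0$, which by Claim \ref{claim2} forces $f(\hat x)=y$. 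Thus $y\in f(X)$, contradicting $y\in\partial f(X)$ (a boundary point of an open set is not in the set). Therefore $f(X)$ has empty boundary in $Y$; being nonempty and open, and $Y$ connected, $f(X)=Y$, so $f$ is onto.

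The main obstacle is the bookkeeping around the value $\inf_X F_y$: hypothesis \ddagger) is deliberately weak, covering only minimizing sequences with infimum zero, so one must first secure that the relevant $y$ (a boundary point of the open set $f(X)$) has $\inf_X F_y=0$ before Lemma \ref{mimimizersequence} and \ddagger) can be chained together. Once that reduction is in place, the remainder is a verbatim repetition of the existence argument in Theorem \ref{main} via Claims \ref{claim1} and \ref{claim2} and Lemma \ref{lambdacero}, using only that $f$ is a local $C^1$-diffeomorphism (so $\sur df>0$ everywhere and $f$ is open).
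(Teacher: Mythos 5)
Your proof is correct, and it is in fact more complete than what the paper offers: the paper gives no written proof, merely asserting that the result follows ``by the second part of the proof of Theorem \ref{main}''. Read literally, that existence argument applies Lemma \ref{mimimizersequence} to get a minimizing sequence with $F_y(x_n)\to\inf_X F_y$ satisfying \eqref{limiteacero} and then extracts a convergent subsequence using the \emph{full} weighted Chang \PS-condition, which covers any sequence with bounded values; under the weaker hypothesis \ddagger) the extraction is only licensed when $\inf_X F_y=0$, i.e.\ when $y\in\overline{f(X)}$, and this is exactly the point you identified. Your reduction --- restrict to $y\in\partial f(X)$ (equivalently, show $f(X)$ is closed), where $\inf_X F_y=0$ holds automatically, then conclude via openness of $f$ and connectedness of $Y$ --- is precisely the missing glue, and the rest (Lemma \ref{lambdacero}, Claim \ref{claim1} with $\sur df(\hat x)>0$, Claim \ref{claim2}) goes through verbatim; note that at that stage you could even skip the critical-point detour, since $x_{n_k}\to\hat x$ with $F_y(x_{n_k})\to 0$ gives $F_y(\hat x)=0$ by continuity alone. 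Two small remarks: the opening sentence of your second paragraph momentarily invokes \ddagger) for a sequence with $F_y(x_n)\to c>0$, which the hypothesis does not permit --- you correct this immediately in the parenthetical, and the corrected contradiction argument is the one that counts, but in a final write-up the argument should start directly from $y\in\partial f(X)$; and the nonemptiness of $\partial f(X)$ when $f(X)\neq Y$ is what your closing connectedness step supplies, so the logical order is: a contradiction for every boundary point forces $\partial f(X)=\emptyset$, hence $f(X)$ is clopen, nonempty, and equals $Y$. With that ordering made explicit, your argument is a valid and self-contained proof of Theorem \ref{doubledaggerheorem}.
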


Note that if $f$ is a  local $C^1$-diffeomorphism then $\star)$ is fulfilled if and only if  both conditions, \dagger) and \ddagger), are safistied (with the same $h$). Indeed: suppose that $\star)$ is fulfilled but for a point $y\in Y$ there is a sequence $\{x_n\}$ in $X$ such that $\lim_{n\rightarrow\infty}F_y(x_n)=c$ and $\lim_{n\rightarrow \infty}\lambda_{F_y}(x_n)(1+h(x_n))=0.$ As before, since $F_y$ satisfies the weighted Chang $\PS$-condition, the sequence $\{x_n\}$ has a convergent subsequence $\{x_{n_k}\}$ with limit $\hat x$ such that  $\lambda_{F_y}(\hat x)=0$ and $f(\hat x)\neq y$. So we get the contradiction  $\sur df(\hat x)=0$. Then we have \dagger). Furthermore \ddagger) is also fulfilled by definition of the weighted Chang $\PS$-condition.  Since $F_y$ is always nonnegative, the converse is trivial.

\begin{example}\label{ejemplocondicionintegral}
Let $f$ be as before. Suppose also that $f$ satisfies the Hadamard integral condition, namely (see for instance \cite{hadamard, john, plastock}):
\begin{enumerate}
\item[$\star\star$)] $\lim_{r\rightarrow}\varrho(r)=\infty$ where  $\varrho(r)=\int_0^{r}\inf_{|x|\leq\rho}\frac{1}{\|df(x)^{-1}\|}d\rho.$
\end{enumerate}
Set the nondecreasing map  $h$ given by $\frac{\alpha}{1+h(\rho)}=\inf_{|x|\leq\rho}\frac{1}{\|df(x)\|^{-1}}$ where $\alpha=\|df(0)^{-1}\|^{-1}$. The map $h$ is continuous  since $f$ is $C^1$. Furthermore, condition $\star\star$) implies that  $h$ satisfies \eqref{integral}.  It is easy to see that for all $x\in X$:
$$0<\alpha\leq \sur df(x) (1+h(|x|)).$$
Therefore, by Claim \ref{claim1}, for all $y\in Y$ and $x\in X$:
$$\lambda_{F_y}(x)(1+h(|x|))\geq\lambda_{\pmb{|}\cdot\pmb{|}}(f(x)-y)\alpha\geq 0.$$
Suppose that there is a sequence $\{x_n\}$ in $X$ such that $\lim_{n\rightarrow\infty} F_y(x_n)=c>0$. Then,  without loss of generality, we can assume that $f(x_n)\neq y$ for all natural $n$. Then, by Claim \ref{claim2},  $\lim_{n\rightarrow\infty}\lambda_{F_y}(x_n)(1+h(|x_n|))$ can't be zero. Therefore, \dagger) is satisfied and by Theorem \ref{daggertheorem} the map $f$ is injective. Now, let $\{x_n\}\subset X$  be such that $\lim_{n\rightarrow\infty} F_y(x_n)=0$ and $\lim_{n\rightarrow\infty}\lambda_{F_y}(x_n)(1+h(|x_n|))=0$. Then, by the above inequality, $\lim_{n\rightarrow\infty}\lambda_{\pmb{|}\cdot\pmb{|}}(f(x_n)-y)=0$. Claim \ref{claim2} implies that there exists  $m>0$ such that $f(x_n)=y$ for all $n\geq m$. Since $f$ is injective, this means that $x_n=f^{-1}f(x_n)=f^{-1}(y)$ for all $n\geq m$, so $\{x_n\}$ converges to $f^{-1}(y)$. Therefore \ddagger) is fulfilled. So, we have shown that {\it  if $f$ satisfies the Hadamard integral condition then, for every $y\in Y$,  then the map  $F_y(x)=\frac{1}{2}\pmb{|}f(x)-y\pmb{|}^2$ satisfies the weighted Chang $\PS$-condition} for  $h$ given as above. We can conclude that $f$ is norm-coercive and,  as it is well known, $f$ is a global diffeomorphism.
\end{example}

\section{Relationship to the classical results}
We have shown that Hadamard integral condition satisfies criterion $\star$). We are now interested in locating  condition $\star$) with respect to other  global inversion criteria. As it is known, by the Invariance of Domain Theorem, every mapping  $f:\mathbb{R}^n\rightarrow\mathbb{R}^n$ such that $|f(x)-f(u)|=|x-u|$  for all $u,x\in\mathbb{R}^n$ is a homeomorphism onto $\mathbb{R}^n$. There is an infinite-dimensional version of this fact for Fredholm maps of index zero between real Banach spaces \cite{rheinboldt}. In particular, if $f:X\rightarrow Y$ is a local $C^1$-diffeomorphism and:
\begin{enumerate}
\item[1)] $f$ is a distance preserving isometry
\end{enumerate}
then $f$ is a surjective map (and of course, an injective map). Therefore, $f$ is a global diffeomorphism. Actually, in this case, by the Mazur-Ulam Theorem, $f$ is  an affine transformation.
The distance-preserving hypothesis can be relaxed by the following one  \cite{rheinboldt}:
\begin{enumerate}
\item[2)] $f$ is an expansive map, that is, there is $\alpha>0$ such that for all $x,u\in X$: $$\pmb{|}f(u)-f(x)\pmb{|}\geq \alpha |u-x|.$$
\end{enumerate}
Since if $f$ is local diffeomorphism at $x$,  $\inf_{|v|=1}\pmb{|}df(x)v\pmb{|}=\liminf_{u\rightarrow x}\frac{\pmb{|}f(u)-f(x)\pmb{|}}{|u-x|}$ \cite{john}, both criteria above are a particular case of the Hadamard-Levy hypothesis \cite{levy}:
\begin{enumerate}
\item[3)] There is $\alpha>0$ such that $\pmb{|}df(x)v\pmb{|}\geq \alpha |v|$ for all $x,v\in X$
\end{enumerate}
Which, in turn, is a specific case of  the Hadamard integral condition $\star\star)$, since $\|df(x)^{-1}\|^{-1}=\inf_{|v|=1}\pmb{|}df(x)v\pmb{|}$. Note that by Example \ref{ejemplocondicionintegral}, criterion $\star\star)$ implies that $f$ is norm-coercive and  $\inf_{|x|\leq\rho}\|df(x)^{-1}\|^{-1}<0 \mbox{ for all }\rho>0$. Therefore, if $\star\star)$ is fulfilled then:
\begin{enumerate}
\item[4)] $f$ is norm-coercive and: \begin{equation}\label{pcondition}\sup_{|x|\leq\rho}\|df(x)^{-1}\|<\infty, \mbox{ for all }\rho>0.\end{equation}
\end{enumerate}
The item $4)$ corresponds to another well-known criterion of global inversion \cite{plastock, katriel}. As Katriel noticed in \cite{katriel}, if criterion $4)$  holds then:
\begin{enumerate}
\item[5)] For some  ---hence any--- $y\in Y$ \cite{katriel}: $$\inf\{\mbox{\rm sur}(f,x): \pmb{|}f(x)-y\pmb{|}<\varrho\}>0,\mbox{ for all }\varrho>0.$$ 
\end{enumerate}
Above, $\mbox{sur}(f,x)$ is the  {\it surjection constant} of $f$ at $x$, originally introduced by Ioffe \cite{ioffe}  for non differentiable maps between Banach spaces, namely:
$$\mbox{\rm sur}(f,x)=\liminf_{r\rightarrow 0}\frac{1}{r}\sup\{R\geq 0:B_R(f(x))\subset f(B_r(x))\}.$$
Actually, if $f$ is a local diffeomorphism at $x$ we have $\mbox{sur}(f,x)=\sur df(x).$ This last point corresponds to the hypothesis of the Katriel global inversion theorem. So far we have:\footnote{In \cite{gutu16}, the author presents an analogous relationship between these criteria for mappings between Banach-Finsler manifolds (e.g. Riemannian manifolds).}

\begin{center}
\setlength{\tabcolsep}{0.1em}
\begin{tabular}{cccccccccccc}
1)&$\Rightarrow$ 2)&$\Rightarrow$ & 3) & $\Rightarrow$ & $\star\star)$ & $\Rightarrow$ & $4)$& $\Rightarrow$ & $5)$\\
\end{tabular}
\end{center}
In order to join the above chain  with $\star)$ we shall see that:

\

\begin{proof}[$5)$ implies  $\star)$.]
Let $y\in Y$ be fixed. We will prove that $F_y$ satisfies the Chang \PS-condition. First, we shall see that $\dagger)$ is fulfilled for $h=0$: suppose that there is a sequence $\{x_n\}$ in $X$ such that $\lim_{n\rightarrow \infty}F_y(x_n)=c>0$  and $\lim_{n\rightarrow\infty}\lambda_{F_y}(x_n)=0$.
 Then there exists $\varrho>0$ such that $F_y(x_n)<\varrho$ and, without loss of generality, we can suppose that $f(x_n)\neq y$ for all $n$. Since $f$ satisfies the Katriel condition then  $$\inf\{\sur df(x):F_y(x)<\varrho\}>0.$$ So, $\sur df(x_n)\geq\alpha$ for all $n$ and some $\alpha>0$. Therefore, by Claim \ref{claim1}: $$0<\alpha<\sur df(x_n)\leq\lambda_{F_y}(x_n).$$ Thus $F_y(x_n)\rightarrow 0$, then  we get a contradiction. 

Now, let $\{x_n\}\subset X$  be such that $\lim_{n\rightarrow\infty} F_y(x_n)=0$ and $\lim_{n\rightarrow\infty}\lambda_{F_y}(x_n)=0$. Condition $5)$ implies that $\sur df(x_n)>\alpha>0$ for all $x_n$ such that $f(x_n)\rightarrow y$ for some $\alpha>0$. Then, as in Example \ref{ejemplocondicionintegral}, since $f$ is injective, this means that  \ddagger) is fulfilled. 
\end{proof}

In \cite{rabier} Rabier gives a characterization of  global diffeomorphisms  in terms of sort of a generalized $\mbox{PS}$-condition which is satisfied trivially:\begin{enumerate}
\item[6)] There is no sequence $\{x_n\}$ in $X$ with $f(x_n)\rightarrow y\in Y$ and $\sur df(x_n)\rightarrow 0$.
\end{enumerate}
Rabier proved that a local $C^1$-diffeomorphism is  a global one if and only if 6) is fulfilled. On the other hand, by Banach-Mazur-Caccipioli Theorem,  the local $C^1$-diffeomorphism  $f$ is a diffeomorphism onto $Y$ if and only if it is a proper map (equivalently a closed map).  Summing up, if $f$ is a local $C^1$-diffeomorphism between Banach spaces then:

\

\begin{center}
\setlength{\tabcolsep}{0.1em}
\begin{tabular}{cccccccccccc}
1)&$\Rightarrow$& 2)&$\Rightarrow$&$\cdots$& $5)$&$\Rightarrow$&$\star)$ & $\Rightarrow$& \begin{tabular}{c}$f$ is a norm-coercive\\ global diffeomorphism\end{tabular} &$\Rightarrow$& 6)\\
&&&&&&&&&&$\nNwarrow$&$\Updownarrow$\\
&&&&&&&&&&&\begin{tabular}{c}$f$ is a global \\diffeomorphism\end{tabular}\\
&&&&&&&&&&&$\Updownarrow$\\
&&&&&&&&&&&\begin{tabular}{c}$f$ is proper/closed\\ map\end{tabular}\\
\end{tabular}
\end{center}

\

Now, suppose that  $f:\mathbb{R}^n\rightarrow\mathbb{R}^n$ is a global $C^1$-diffeomorphism, then $f$ is a proper map, and so  is a norm-coercive map. Furthermore, since $x\mapsto \|df(x)^{-1}\|$ is continuous,  \eqref{pcondition} is fulfilled.  Therefore, $f$ satisfies condition 4). So, $4)$, $5)$, $\star$), and $6)$ are all equivalent in the finito-dimensional case.

\bibliographystyle{plainurl}
\bibliography{referenciasDOI}

\end{document}